\documentclass{amsart}
\usepackage{eurosym}
\usepackage{amsmath}
\usepackage{amsfonts}
\usepackage{graphicx}
\usepackage{caption}
\usepackage{subcaption}
\usepackage{verbatim}

\setcounter{MaxMatrixCols}{10}

\newtheorem{theorem}{Theorem}

\newtheorem{definition}[theorem]{Definition}

\newtheorem{lemma}[theorem]{Lemma}

\newtheorem{proposition}[theorem]{Proposition}
\newtheorem{remark}[theorem]{Remark}

\begin{document}

\title[Rigorous estimate on convergence to equilibrium and escape rates]{An elementary way to rigorously estimate convergence to equilibrium
and escape rates.}

\author{Stefano Galatolo$^1$}
\email{$^1$ galatolo@dm.unipi.it}
\address{Dipartimento di Matematica, Universita di Pisa, Via \ Buonarroti 1,Pisa - Italy}
\author{Isaia Nisoli$^2$}
\email{$^2$ nisoli@im.ufrj.br}
\address{Instituto de Matem\'{a}tica - UFRJ
 Av. Athos da Silveira Ramos 149,
Centro de Tecnologia - Bloco C
Cidade Universitária -
Ilha do Fund\~{a}o.
Caixa Postal 68530
21941-909 Rio de Janeiro - RJ - Brasil}
\author{Beno\^{\i}t Saussol$^3$}
\email{$^3$ benoit.saussol@univ-brest.fr}
\address{Laboratoire de Math\'{e}matiques de Brest UMR CNRS 6205
D\'{e}partement de Math\'{e}matiques
Universit\'{e} de Bretagne Occidentale
6, avenue Victor Le Gorgeu, CS 93837, F-29238 BREST Cedex 3 - France}

\begin{abstract}
We show an elementary method to obtain (finite time and asymptotic) computer  assisted explicit upper bounds on convergence to equilibrium (decay of correlations) and escape rate for systems satisfying a Lasota
Yorke inequality. The bounds are deduced by the ones of suitable approximations of the system's transfer operator.
  We also present some rigorous experiment on some nontrivial example.
\end{abstract}

\maketitle

\tableofcontents

\section{Introduction}

The evolution of a chaotic system is unpredictable and difficult to 
describe, but its statistical properties are sometime predictable and
(reasonably) simple to be described.
Many of these properties are related to its associated transfer operator, essentially via its spectrum.
This is a linear operator representing the action of the dynamics on
measures space. Let us consider the set $SM(X)$ of finite signed Borel measures 
on the metric space $X$. A Borel map $T\colon X\to X$ naturally induces a linear
operator $L_{T}:SM(X)\rightarrow SM(X)$ called the \textbf{transfer operator},
defined as follows. If $\mu \in SM(X)$ then $L_{T}[\mu ]\in SM(X)$
is the measure such that
\par
\begin{equation*}
L_{T}[\mu ](A)=\mu (T^{-1}(A)).
\end{equation*}
Sometimes, when no confusion arises, we simply denote $L_{T}$ by $L$.

In this paper we address two important statistical features of the dynamics: the rate of \emph{convergence to equilibrium} and the \emph{escape rate} in open dynamical systems. 

Convergence to equilibrium (see Section \ref{1}) is a quantitative
estimation of the speed in which a starting, absolutely continuous measure
approaches the physical invariant measure.
This is related to the spectrum of the transfer operator, since the speed is exponential in the presence of a spectral gap,
and to decay of correlations. Indeed, an upper bound on the decay of correlations can be obtained
from convergence to equilibrium estimation in a large class of cases, see 
\cite{AGP}). An estimation for these rates is a key step to deduce many other consequences:
central limit theorem, hitting times, recurrence rate... (see e.g. \cite{Bo,BS,G07,L2}).

The escape rate refers to open systems, where the phase space has a ``hole''
(see Section \ref{escape}) and one wants to understand
quantitatively, the speed of loss of mass of the system trough the hole.
This is related to the spectral radius of a truncated transfer operator and to the presence of metastable states (see e.g. the book \cite{BBF} for an introduction).

We will present a method which allows to obtain an efficient, effective and quite elementary {\em finite time} and {\em asymptotic} upper estimations for these decay rates. The strategy is applicable under two main assumptions on the transfer operator.

\begin{itemize}
\item the transfer operator is regularizing on a suitable space; it
satisfies a Lasota Yorke inequality (see Equation \ref{1});

\item the transfer operator can be approximated in a satisfactory way by a
finite dimensional one. (see Equation \ref{2}) 
\end{itemize}

The first item in some sense describes the small scale behavior of
the system. The regularizing action implies that at a small scale we see a kind of uniform behavior.
 The macroscopic behavior is then described by a ``finite resolution'' approximation of the transfer operator. This will be represented by suitable matrices, as the transfer operator is linear, and its main features will be computable from the coefficients of the matrix.

In the following we will enter in the details of how this strategy can be
implemented in general, and in some particular system for which we will
present some experiment rigorously implemented by interval arithmetics.

\subsubsection*{Acknowledgements}   BS thanks INdAM and the University of Pisa for support and hospitality. SG thanks the Laboratoire de Mathématiques de Bretagne Atlantique for support and hospitality.
IN would like to thank the University of Pisa for support and hospitality, the IM-UFRJ and CNPq.
This work was partially supported by the ANR project Perturbation (ANR-10-BLAN 0106) and by EU Marie-Curie IRSES Brazilian-European partnership in Dynamical Systems (FP7-PEOPLE-2012-IRSES 318999 BREUDS).

\section{Recursive convergence to equilibrium estimation for maps satisfying
a Lasota Yorke inequality\label{sec1}}

Consider two vector subspaces of the space of signed measures on $X$
$$\mathcal{ S\subseteq }\mathcal{ W\subseteq }SM(X),$$ 
endowed with two norms, the strong norm $||~||_s$ on $\mathcal{S}$ and the weak norm $||~||_{w}$ on $\mathcal{W}$, 
such that $||~||_s\geq ||~||_{w}$ on $\mathcal{W}$.

  We say that the probability measure preserving transformation $(X,T,\mu)$ has convergence to equilibrium with speed $\Phi$ with respect to these norms if for any Borel probability measure $\nu $ on $X$
\begin{equation}
||L^{n}\nu -\mu ||_{w}\leq ||\nu ||_{s}\Phi (n).  \label{wwe}
\end{equation}

This speed can be also estimated by the rate of convergence to $0$ of $||L^{n}\xi ||_{w}$ 
for signed measures $\xi \in \mathcal{S}$ such that $\xi (X)=0$  or  by estimating the decay rate of correlation integrals like  $$|\int f\circ T^{n}~g~d\mu-\int f~d\mu \int g~d\mu|$$ for observables $f,g$ in suitable function spaces.

It is important both to have a certified quantitative estimation for this convergence
at a given time (numerical purposes, rigorous computation of the invariant
measure as in \cite{GN2},\cite{H},\cite{I}, see also Remark \ref{cpmp}), or an estimation for its
asymptotic speed of convergence (computer assisted proofs of the speed of
decay of correlations and its statistical consequences).

In the literature, the problem of computing rigorous bounds on the decay of
correlation and convergence to equilibrium rate of a given system was approached
by  spectral stability results (see e.g. \cite{L}).
The use of these methods is limited by the complexity of the assumptions and of the a priori estimations which are needed.

 In the following we show how the Lasota Yorke inequality, which can be
established in many systems, coupled with a suitable approximation of the
system by a finite dimensional one allows directly to deduce finite time and asymptotic  upper bounds
on the convergence to equilibrium of the system in a simple and elementary way.

{\bf Assumptions.} Let us suppose that our system satisfies:

\begin{itemize}
\item The system satisfies a Lasota Yorke inequality.
There exists constants $A,B,\lambda _{1}\in {\mathbb R}$ and $\lambda _{1}<1$ such that 
$\forall f\in \mathcal{S},\forall n\geq 1$%
\begin{equation}
||L^{n}f||_s\leq A\lambda _{1}^{n}||f||_s+B||f||_{w}.  \label{1}
\end{equation}%

\item There exists a family of "simpler" transfer
operators $L_{\delta }$ approximating $L$  satisfying a certain
approximation inequality: there are constants $C,D$ such that $\forall g\in 
\mathcal{S},\forall n\geq 0$:%
\begin{equation}
||(L_{\delta }^{n}-L^{n})g||_{w}\leq \delta (C||g||_s +nD||g||_{w}).  \label{2}
\end{equation}

\item There exists $\delta>0$, $\lambda _{2}<1$ and $n_1$ such that, setting
$$V=\{\mu\in \mathcal{S}|\mu (X)=0\}$$ 
we have  $L_\delta (V)\subseteq V $ and
\begin{equation}
\forall v\in V,~||L_{\delta }^{n_{1}}(v)||_{w}\leq \lambda _{2}||v||_{w}.
\label{3}
\end{equation}
\end{itemize}

\begin{remark}
In the following we will consider examples of
systems satisfying such inequality, where $\mathcal{S}$ is the space of
measures having a bounded variation density, $||~||_s$ is the bounded
variation norm and $||~||_{w}$ is the $L^{1}$ one.

We also remark that condition (\ref{2}) is natural for approximating operators 
$L_{\delta }$ defined (as it is commonly used), by $\pi _{\delta }L\pi
_{\delta }$, where $\pi _{\delta }$ is a projection on a finite dimensional
space with suitable properties, see Section \ref{appp}, where it is shown how to obtain (\ref{2})  under these assumptions.

We remark that in Equation (\ref{3}) , the condition is supposed on $L_{\delta }^{n}$ which is
supposed to be simpler than $L^{n}$ (e.g.  a discretization with
a grid of size $\delta $ represented by a matrix, see Section \ref{appineq}) and its
properties might be checked by some feasible computation.

\end{remark}

Under the above conditions we can effectively estimate from above the convergence to equilibrium in the system
in terms of the matrix
$$M=\left( 
\begin{array}{cc}
A\lambda _{1}^{n_{1}} & B \\ 
\delta C & \delta n_{1}D+\lambda _{2}%
\end{array}%
\right) .$$

Since $M$ is positive, its largest eigenvalue is 
\begin{equation}
\rho =\frac{A\lambda _{1}^{n_{1}}+\delta n_{1}D+\lambda _{2}+\sqrt{(A\lambda
_{1}^{n_{1}}-\delta n_{1}D-\lambda _{2})^{2}+4\delta BC}}{2}.  \label{eq:rho}
\end{equation}
Let $(a,b)$ be the left eigenvector of the matrix $M$ associated to the eigenvalue $\rho$, normalized in a way that $a+b=1$.

The condition $\rho<1$ below implies that the powers of $M$ go to zero exponentially fast.
Note that the quantities $\delta BC,$ $\delta n_{1}D$ have a chance to be small when 
$\delta $ is small, but this is not guaranteed since the choice of $n_{1}$ depends on $\delta $.
If we consider the case of piecewise expanding maps, with $L_{\delta }$ being the
Ulam approximation of $L$, this is the case (see \cite{GN}, Theorem 12 ).
 
\begin{theorem}\label{prop1}
Under the previous assumptions  \ref{1}, \ref{2}, \ref{3}, if $\rho<1$ then 
for any $g\in V$, 

(i) the iterates of $L^ {in_1}(g)  $ are bounded by 
\begin{equation*}
 \left(
\begin{array}{c}
|| L^{i n_1} (g) ||_s \\ 
|| L^{i n_1} (g) ||_w 
\end{array}
\right)
\preceq M^i
 \left(
\begin{array}{c}
||  g ||_s \\ 
||  g ||_w  
\end{array}
\right)
\end{equation*}
Here $\preceq $ indicates the componentwise $\leq $ relation (both
coordinates are less or equal).

(ii) In particular we have 
\[
||L^{in_{1}}g||_s \leq (1/a)\rho ^{i}||g||_s,
\]
and 
\[
||L^{in_{1}}g||_{w}\leq (1/b)\rho ^{i}||g||_s.
\]

(iii) Finally $\forall k\in \mathbb{N}$ 
\begin{eqnarray}
\Vert L^{k}g\Vert_s  &\leq &(A/a+B/b)\rho
^{\left\lfloor \frac{k}{n_{1}}\right\rfloor }||g||_s.  \label{decayrate} \\
\Vert L^{k}g\Vert _{w} &\leq & (B/b) \rho ^{\left\lfloor \frac{k%
}{n_{1}}\right\rfloor }||g||_s. \label{decayrate_weak}
\end{eqnarray}%

\end{theorem}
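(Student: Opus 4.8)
The core of the argument is to establish part (i) by showing that one application of $L^{n_1}$ expands the vector $(\|g\|_s, \|g\|_w)^{\top}$ by at most the matrix $M$, componentwise, whenever $g \in V$; parts (ii) and (iii) will then follow by routine linear algebra and bookkeeping. For the first coordinate, I would simply apply the Lasota--Yorke inequality \eqref{1} with $n=n_1$ to get $\|L^{n_1} g\|_s \leq A\lambda_1^{n_1}\|g\|_s + B\|g\|_w$, which is exactly the first row of $M$ acting on the vector. The second coordinate is where the real work lies: I want to bound $\|L^{n_1} g\|_w$. The plan is to insert $L_\delta^{n_1} g$ as an intermediary,
\[
\|L^{n_1} g\|_w \leq \|(L^{n_1} - L_\delta^{n_1}) g\|_w + \|L_\delta^{n_1} g\|_w,
\]
bound the first term by \eqref{2} with $n = n_1$, giving $\delta(C\|g\|_s + n_1 D \|g\|_w)$, and bound the second term by \eqref{3}, giving $\lambda_2 \|g\|_w$, which is legitimate precisely because $g \in V$ and $L_\delta(V) \subseteq V$. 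Adding these yields $\|L^{n_1} g\|_w \leq \delta C \|g\|_s + (\delta n_1 D + \lambda_2)\|g\|_w$, which is the second row of $M$. Since $L^{n_1}$ maps $V$ into $V$ (as $L$ preserves the total mass, hence $V$), I can iterate: $L^{in_1} g = (L^{n_1})^i g$ stays in $V$ at every stage, and the componentwise bound compounds to $M^i$ applied to $(\|g\|_s, \|g\|_w)^{\top}$. Here I need that $M$ has nonnegative entries so that the componentwise order is preserved under multiplication by $M$ — which is immediate from the definitions — giving (i).

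For part (ii), I would use the left eigenvector $(a,b)$ with $a+b=1$ and $a,b>0$ (positivity follows from Perron--Frobenius applied to the positive matrix $M$, or can be checked directly from the explicit formula). Pairing the vector inequality from (i) on the left with $(a,b)$, and using that $(a,b)M = \rho(a,b)$ and that $\|g\|_w \le \|g\|_s$, gives
\[
a\|L^{in_1} g\|_s + b\|L^{in_1} g\|_w \leq (a,b) M^i (\|g\|_s,\|g\|_w)^{\top} = \rho^i (a\|g\|_s + b\|g\|_w) \leq \rho^i \|g\|_s.
\]
Since both terms on the left are nonnegative, each is individually bounded by $\rho^i \|g\|_s$, and dividing by $a$ (resp. $b$) yields the two inequalities of (ii).

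For part (iii), write $k = in_1 + r$ with $i = \lfloor k/n_1 \rfloor$ and $0 \le r < n_1$. Apply the Lasota--Yorke inequality \eqref{1} with exponent $r$ (noting $\lambda_1^r \le 1$) to $L^{in_1} g$ to get $\|L^k g\|_s \le A\|L^{in_1}g\|_s + B\|L^{in_1}g\|_w$, then substitute the bounds from (ii) to obtain $\|L^k g\|_s \le (A/a + B/b)\rho^i \|g\|_s$. For the weak norm, use $\|L^k g\|_w = \|L^r(L^{in_1}g)\|_w \le \|L^r(L^{in_1}g)\|_s$ and then \eqref{1} again, or more simply observe that the weak norm is non-increasing under $L$ on measures (as $L$ is a transfer operator, $\|L\nu\|_w \le \|\nu\|_w$ in the $L^1$ sense), so $\|L^k g\|_w \le \|L^{in_1} g\|_w \le (B/b)\rho^i \|g\|_s$ directly from (ii). I expect the only genuinely delicate point to be ensuring $n_1 \mid$-decomposition bookkeeping and the sign/positivity conditions on $(a,b)$ are handled cleanly; everything else is a direct chaining of the three hypotheses \eqref{1}, \eqref{2}, \eqref{3}.
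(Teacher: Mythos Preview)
Your proposal is correct and follows essentially the same route as the paper: part (i) comes from combining the Lasota--Yorke inequality for the strong coordinate with the splitting $\|L^{n_1}g\|_w \le \|(L^{n_1}-L_\delta^{n_1})g\|_w + \|L_\delta^{n_1}g\|_w$ for the weak coordinate, then iterating via the nonnegativity of $M$; part (ii) is obtained by pairing with the positive left eigenvector $(a,b)$ and using $a+b=1$ together with $\|g\|_w\le\|g\|_s$; and part (iii) is the Lasota--Yorke inequality applied once more to the remainder $r$ after extracting $\lfloor k/n_1\rfloor$ full blocks. The only minor point to flag is that your ``simpler'' argument for the weak bound in (iii) invokes the weak contraction property $\|L\nu\|_w\le\|\nu\|_w$, which is standard for transfer operators but is not among the three numbered hypotheses of the theorem; the paper is equally terse here, so this is not a defect relative to the original.
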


\begin{proof}
(i) 
Let us consider $g_{0}\in V $ and  denote $g_{i+1}=L^{n_{1}}g_{i}.$
By assumption \ref{1} we have
\begin{equation*}
||L^{n_{1}}g_{i}||_s \leq A\lambda _{1}^{n_{1}}||g_{i}||_s  +B||g_{i}||_{w} 
\end{equation*}
Putting together the above assumptions \ref{2} and \ref{3} we get
\begin{equation}
\begin{split}
||L^{n_{1}}g_{i}||_{w} 
& \leq ||L_{\delta }^{n_{1}}g_{i}||_{w}+\delta (C||g_{i}||_s+n_{1}D||g_{i}||_{w}) \\
& \leq \lambda _{2}||g_{i}||_{w}+\delta (C||g_{i}||_s +n_{1}D||g_{i}||_{w}).
\end{split}
\label{kkk}
\end{equation}

Compacting these two inequalities into a vector notation, setting $v_{i}=\left( 
\begin{array}{c}
||g_{i}||_s  \\ 
||g_{i}||_{w}%
\end{array}%
\right) $ 
we get
\begin{equation}
v_{i+1}\preceq \left( 
\begin{array}{cc}
A\lambda _{1}^{n_{1}} & B \\ 
\delta C & \delta n_{1}D+\lambda _{2}%
\end{array}%
\right) v_{i}  \label{4}
\end{equation}%
 The relation $\preceq $ can be used because
the matrix is positive. This proves (i) by an immediate induction.

(ii) Let us introduce the $(a,b)$ balanced-norm as $||g||_{(a,b)}=a||g||_s +b||g||_{w}$. The first assertion gives 
\[
||L^{in_{1}}g||_{(a,b)}\leq (a,b)\cdot M^i\cdot \left( 
\begin{array}{c}
||g||_s  \\ 
||g||_{w}%
\end{array}%
\right) ,
\]%
hence
\[
||L^{in_{1}}g||_{(a,b)}\leq \rho ^{i}||g||_{(a,b)}.
\]%
{}From this, the statement follows directly.

(iii) Writing any integer $k=in_{1}+j$ with $0\leq j<n_{1}$ , by assumption \ref{1} we have
\[
\Vert L^{k}(g_{0})\Vert_s  \leq A\lambda _{1}^{j}\Vert L^{in_{1}}g_{0}\Vert _s
+B\Vert L^{in_{1}}g_{0}\Vert _{w}.
\]%
and the conclusion follows by the second assertion (ii).
\end{proof}

\begin{remark}
We  remark that our approach being based on a vector inequality has some
similarity with the technique proposed in \cite{H}. The first inequality used
is the same in both approaches, the second is different. Our inequality
relies on the approximation procedure and is more general. 
\end{remark}

\begin{remark} \label{cpmp}
We also remark that our method allows to bound  the strong norm of the iterates
of a zero average starting measure. Considering $L^{n}(m-\mu )$ where $m$ is a suitable starting measure  (Lebesgue measure in many cases) and $\mu $ is the invariant one, we can understand how
many iterations of $m$ are necessary to arrive at a given small (strong) distance from the
invariant one. This, added to a way to simulate iterations of $L$ with small
 errors in the strong norm, allows in principle the rigorous computation of the invariant measure up
to small errors in the strong norm.
\end{remark}

\section{Escape rates\label{escape}}

A system with a hole is a system where there is a subset $H$ such that when
a point falls in it, its dynamics stops there. We consider $H$ to be not
part of the set where the dynamics acts. Iterating a measure by the dynamics
will lead to loose some of the measure in the hole at each iteration,
letting the remaining measure decaying at a certain rate.

In many of such systems a Lasota Yorke and an approximation inequality
(Equations \ref{1}, \ref{2}) can be proved, hence the procedure of the previous section allows to
estimate this rate, which  is related to the spectral radius of the transfer operator.

Let us hence  consider a starting system without hole, with transfer operator $L$,
  consider an hole in the set $H$ and let $1_{H^{c}}$ be the indicator
function of the complement of $H$. The transfer operator of the system with
hole is given by  
\begin{equation*}
L_{H}f=1_{H^{c}}Lf.
\end{equation*}

Similarly to the convergence to equilibrium we can say that the escape rate of the system with respect to norms $||\ ||_s ,||\ ||_w $ is  faster than  $\Psi $ if   for each $f$:
\[ ||L^n f||_w  \leq \Psi(n) ||f||_s . \]

We will see in the next proposition, that an iterative procedure as the one of the previous section can be implemented to estimate the escape rate.
We will need  assumptions similar to the ones listed before.
These are natural assumptions for open  systems constructed from a system satisfying a Lasota Yorke inequality.
 We will verify it in some example of piecewise expanding maps with a hole (see Section \ref{holely}). 

\begin{theorem}\label{prop2}
With the notations of Theorem \ref{prop1} let  us suppose

\begin{itemize}

\item the system with hole satisfies for some $\lambda_1<1$
\begin{equation}
\forall n\ge 1,\quad ||L_{H}^{n}f||_s \leq A\lambda _{1}^{n}||f||_s +B||f||_{w}.
\end{equation}

\item $L_\delta ({\mathcal S})\subseteq {\mathcal S} $ and  there is an approximation inequality: $\forall g\in \mathcal{S}$%
\begin{equation}
\forall n\ge 1,\quad ||(L_{\delta }^{n}-L_{H}^{n})g||_{w}\leq \delta (C||g||_s +nD||g||_{w}).
\end{equation}
 
\item Moreover, let us suppose there exists $ n_{1}$ for which $A\lambda_1 ^{n_1} <1$ and $\lambda_2<1$ such that
\begin{equation}
\forall v\in \mathcal{S}, ||L_{\delta
}^{n_{1}}(v)||_{w}\leq \lambda _{2}||v||_{w}.
\end{equation}
\end{itemize}
Then for any $i\ge1$

\begin{equation*}
 \left(
\begin{array}{c}
|| L^{i n_1} (f) ||_s \\ 
|| L^{i n_1} (f) ||_w 
\end{array}
\right)
\preceq \left( 
\begin{array}{cc}
A\lambda _{1}^{n_{1}} & B \\ 
\delta C & \delta n_{1}D+\lambda _{2}
\end{array}
 \right)^i
 \left(
\begin{array}{c}
||  f ||_s \\ 
||  f ||_w  
\end{array}
\right).
\end{equation*}

 In particular, as before, we have 
\[
||L^{in_{1}}g||_s \leq (1/a)\rho ^{i}||g||_s,
\]
and 
\[
||L^{in_{1}}g||_{w}\leq (1/b)\rho ^{i}||g||_s.
\]

\end{theorem}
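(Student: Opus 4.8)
The plan is to repeat, almost verbatim, the argument of Theorem \ref{prop1}, with $L$ replaced by the transfer operator with hole $L_{H}$ and with the invariant subspace $V$ replaced by the whole strong space $\mathcal{S}$. This substitution is exactly what the hypotheses of the present statement are tailored for: they are imposed on $L_{H}$ and on all of $\mathcal{S}$, rather than only on zero-average measures, and indeed since $L_{H}$ does not preserve total mass there is no analogue of $V$ here. Fix $g\in\mathcal{S}$, set $g_{0}=g$ and $g_{i+1}=L_{H}^{n_{1}}g_{i}$. The Lasota--Yorke inequality for the open system gives
\[
\|g_{i+1}\|_s \le A\lambda_{1}^{n_{1}}\|g_{i}\|_s + B\|g_{i}\|_w ,
\]
which in particular shows $g_{i+1}\in\mathcal{S}$, so the recursion is well defined and all three hypotheses keep applying along the orbit.

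Next I would bound the weak norm of $g_{i+1}$ by inserting $L_{\delta}^{n_{1}}g_{i}$ and using the approximation inequality together with the contraction on $L_{\delta}^{n_{1}}$; here $L_{\delta}(\mathcal{S})\subseteq\mathcal{S}$ is used so that $g_{i}\in\mathcal{S}$ makes the last hypothesis applicable:
\[
\|L_{H}^{n_{1}}g_{i}\|_w \le \|L_{\delta}^{n_{1}}g_{i}\|_w + \delta\bigl(C\|g_{i}\|_s + n_{1}D\|g_{i}\|_w\bigr) \le \lambda_{2}\|g_{i}\|_w + \delta\bigl(C\|g_{i}\|_s + n_{1}D\|g_{i}\|_w\bigr).
\]
Writing $v_{i}$ for the column vector with entries $\|g_{i}\|_s$ and $\|g_{i}\|_w$, the two scalar estimates combine into $v_{i+1}\preceq Mv_{i}$ with $M$ the same positive matrix as in Theorem \ref{prop1}; positivity of $M$ is precisely what licenses iterating the $\preceq$ relation, and an immediate induction yields the displayed bound $v_{i}\preceq M^{i}v_{0}$.

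The ``in particular'' assertion is then identical to part (ii) of Theorem \ref{prop1}: since $M$, its largest eigenvalue $\rho$ and the normalized left eigenvector $(a,b)$ are unchanged, one introduces the balanced norm $\|g\|_{(a,b)}=a\|g\|_s+b\|g\|_w$, uses $(a,b)M=\rho(a,b)$ to get $\|L_{H}^{in_{1}}g\|_{(a,b)}\le\rho^{i}\|g\|_{(a,b)}$, and reads off $\|L_{H}^{in_{1}}g\|_s\le(1/a)\rho^{i}\|g\|_s$ and $\|L_{H}^{in_{1}}g\|_w\le(1/b)\rho^{i}\|g\|_s$ from $a\|\cdot\|_s\le\|\cdot\|_{(a,b)}$, $b\|\cdot\|_w\le\|\cdot\|_{(a,b)}$ and $\|g\|_{(a,b)}\le\|g\|_s$ (the last because $a+b=1$ and $\|\cdot\|_w\le\|\cdot\|_s$).

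I do not expect a genuine obstacle: the content is a bookkeeping adaptation of Theorem \ref{prop1}. The only points that need checking beyond that proof are that the iterates $g_{i}$ stay in $\mathcal{S}$ (guaranteed by the Lasota--Yorke bound for $L_{H}$ together with $L_{\delta}(\mathcal{S})\subseteq\mathcal{S}$), so the three hypotheses can be invoked at every step, and that dropping the zero-average constraint causes no difficulty — which it does not, since none of the three inequalities for $L_{H}$ and $L_{\delta}$ makes use of it.
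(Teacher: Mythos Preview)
Your proposal is correct and follows essentially the same route as the paper: the paper simply remarks that the proof is the same as that of Theorem~\ref{prop1}, with the only difference being that the iterates of $L_{H}$ and $L_{\delta}$ are considered on the whole space $\mathcal{S}$ rather than on the zero-average subspace $V$. Your write-up makes this explicit and checks the small points (e.g., that $g_i\in\mathcal{S}$) that the paper leaves implicit.
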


The proof of the theorem is essentially the same as the one of theorem \ref{prop1},
we remark  that the main difference between the two propositions is that now we are looking to the behavior of
iterates of $L_{H}$ and $L_{\delta }$ on the whole space $\mathcal{S}$  and not only on the
space of zero average measures $V$.

\section{The Ulam method}

We now give an example of a finite dimensional approximation for
the transfer operator which is useful in several cases: the Ulam method.

Let us briefly recall the basic notions. Let us suppose now that $X$ is a
manifold with boundary. The space $%
X $ is discretized by a partition $I_{\delta }$ (with $k$ elements) and the
transfer operator $L$ is approximated by a finite rank operator $L_{\delta } 
$ defined in the following way: let $F_{\delta }$ be the $\sigma -$algebra
associated to the partition $I_{\delta }$,  define the projection: $\pi
_{\delta }$ as $\pi _{\delta }(f)=\mathbf{E}(f|F_{\delta })$, then%
\begin{equation}
L_{\delta }(f):=\mathbf{\pi }_{\delta }L\pi _{\delta }f.  \label{000}
\end{equation}

In the literature it is shown from different points of views, that taking
finer and finer partitions, in suitable systems, the behavior of this finite dimensional approximation
converges in some sense to the behavior of the original system, including the convergence of the spectral picture, see e.g. \cite{FCMP, BH, F08, L, B, MFG, B}.

We remark that this approximation procedure satisfies the approximation assumption (\ref{2})
 if for example  Bounded Variation    and $L^{1}$ norms are considered, see Lemma \ref{lemp} .
This allows the effective use of this discretization in Theorems \ref{prop1} and \ref{prop2}  for the sudy of   piecewise expanding maps.

\section{Piecewise expanding maps and Lasota Yorke inequalities\label{appineq}}

Let us consider a class of maps which are locally expanding but they can be
discontinuous at some point. We recall some results, showing that these systems satisfy the assumptions needed in our main theorems, in particular the Lasota Yorke inequality.

\begin{definition}
A nonsingular function $T:([0,1],m)\rightarrow ([0,1],m)$ is said to be piecewise
expanding if

\begin{itemize}
\item There is a finite set of points $d_{1}=0,d_{2},...,d_{n}=1$ such that $%
T|_{(d_{i},d_{i+1})}$ is $C^{2}$.

\item $\inf_{x\in \lbrack 0,1]}|D_{x}T|=\lambda _{1}^{-1}>2$ on the set
where it is defined.
\end{itemize}
\end{definition}

It is now well known (see e.g. \cite{LY}) that this kind of maps have an
a.c.i.m. with bounded variation density.

\subsection{Lasota Yorke inequality}

Let us consider an absolutely continuous measure $\mu$ and the following norm

\begin{equation*}
||\mu ||_{BV}=\underset{\phi \in C^{1},|\phi |_{\infty }=1}{\sup |\mu (\phi
^{\prime })|}
\end{equation*}
this is related to the classical definition of bounded variation: it is
straightforward to see that if $\mu $ has density $f$ then
\footnote{ If on a interval $I$, $f\geq q$ then consider a function $\phi \in C^{1}$ which
is =-1 on the left of the interval and =1 on the right, and increasing inside, then $\phi^{\prime }\geq 0$ , $\int \phi ^{\prime }dx=2$ and $\int f\phi ^{\prime
}dx\geq 2q$ we can do similarly if $f\leq -q,$\ hence $||\mu ||\geq 2||f||_{\infty }$. The existence of such interval $I$ of course cannot be ensured in general. In this case the use of Lebesgue's density theorem allows to find an interval where the same argument applies up to a small error.  } 
$2||f||_{\infty }\leq ||\mu ||_{BV}$.

The following inequality can be established (see e.g.  \cite{GN}, \cite{L2}) for the transfer operator of piecewise expanding maps.

\begin{proposition}\label{theo:LY}
\label{th8} If $T$ is piecewise expanding as above and $\mu $ is a measure
on $[0,1]$ 
\begin{equation*}
||L\mu ||_{BV}\leq \frac{2}{_{\inf T^{\prime }}}||\mu ||_{BV}+\frac{2}{\min
(d_{i}-d_{i+1})}\mu (1)+2\mu (|\frac{T^{\prime \prime }}{(T^{\prime })^{2}}%
|).
\end{equation*}
\end{proposition}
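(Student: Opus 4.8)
The plan is to establish the Lasota--Yorke-type inequality for the transfer operator of a piecewise expanding map by unwinding the definition of the $BV$-norm through the duality with $C^1$ test functions. First I would fix a test function $\phi\in C^1$ with $|\phi|_\infty=1$ and compute $L\mu(\phi')$. By the definition of the transfer operator and a change of variables on each monotonicity interval $(d_i,d_{i+1})$, one writes
\[
L\mu(\phi') = \mu\bigl((\phi'\circ T)\bigr) = \mu\Bigl(\tfrac{d}{dx}\bigl(\phi\circ T\bigr)\cdot\tfrac{1}{T'}\Bigr),
\]
so that on each branch the integrand is $\psi' / T'$ where $\psi = \phi\circ T$ is $C^1$ and bounded by $1$ on that branch. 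The heart of the matter is then to absorb the factor $1/T'$ into a $BV$-seminorm estimate.

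The key step is an integration by parts on each interval $(d_i,d_{i+1})$ applied to $\mu\bigl(\psi'/T'\bigr)$. Writing $g=1/T'$, which is $C^1$ on the branch with $|g|\le \lambda_1 = 1/\inf|T'|$ and $g' = -T''/(T')^2$, one gets
\[
\int_{d_i}^{d_{i+1}} \psi' g\, d\mu \;=\; \bigl[\psi g\,\cdot(\text{boundary terms in }\mu)\bigr] \;-\; \int_{d_i}^{d_{i+1}} \psi\, g'\, d\mu \;-\;(\text{term from differentiating }\mu).
\]
Reorganizing, the term with $\psi\cdot(\text{derivative of }\mu)$ is bounded, after taking the supremum over admissible $\phi$ (hence over $\psi$ with $|\psi|_\infty\le1$), by $\lambda_1\|\mu\|_{BV}$ using $|g|\le\lambda_1$; this gives the first term $\frac{2}{\inf T'}\|\mu\|_{BV}$ once one accounts for the factor $2$ coming from the normalization in the definition of $\|\cdot\|_{BV}$ and the fact that summing over adjacent branches the boundary contributions telescope except at the partition points. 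The term $\int \psi g'\,d\mu$ is bounded by $\mu(|g'|) = \mu(|T''/(T')^2|)$ since $|\psi|\le1$, yielding the last term $2\mu(|T''/(T')^2|)$. The boundary terms at the partition points $d_i$, where $\psi$ need not match up across branches and $\mu$ may have ``mass'' evaluated via one-sided limits, are controlled by $\mu(1)$ — the total mass — divided by the minimal gap $\min(d_i - d_{i+1})$ between consecutive partition points; this is where the factor $\frac{2}{\min(d_i-d_{i+1})}\mu(1)$ enters.

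The main obstacle I expect is the careful bookkeeping of these boundary terms: since $T$ is only piecewise $C^2$ and discontinuous at the $d_i$, the test function $\psi=\phi\circ T$ jumps at partition points, and one must estimate the resulting jump contributions uniformly. The standard trick is to replace $\psi$ on each branch by $\psi$ minus its value at (say) the left endpoint, or more precisely to interpolate, so that on each interval the relevant quantity is a genuine oscillation bounded by $2$, and the error of this replacement — an integral of $\mu$ against a bounded function whose $C^1$-norm scales like $1/(d_{i+1}-d_i)$ — produces exactly the $\mu(1)/\min(d_i-d_{i+1})$ term after summing over the finitely many branches. The rest is a routine combination of these three estimates and taking the supremum over $\phi$. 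I would refer to the cited sources \cite{GN}, \cite{L2} for the detailed verification of the boundary-term estimate, presenting here only the structure of the argument.
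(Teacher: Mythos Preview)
The paper does not give a proof of this proposition at all: it is stated with the preface ``The following inequality can be established (see e.g.\ \cite{GN}, \cite{L2})'' and no argument is supplied. So there is nothing to compare against in the paper itself.

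Your sketch is the standard approach from the cited source \cite{L2} (Liverani's functional-analytic treatment): compute $L\mu(\phi')=\mu(\phi'\circ T)$, use $(\phi\circ T)'/T' = \phi'\circ T$ on each branch, then apply the product rule $(\psi/T')' = \psi'/T' - \psi\,T''/(T')^2$ to split off the distortion term, and finally repair the piecewise-$C^1$ function $\psi/T'$ into a genuine $C^1$ test function by an interpolation on each branch, with the interpolation cost producing the $\mu(1)/\min(d_{i+1}-d_i)$ term. That is exactly the mechanism in \cite{L2}, and your outline is sound.

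One small caution: your explanation of where the factors of $2$ come from (``normalization in the definition of $\|\cdot\|_{BV}$'' and ``telescoping'') is not quite the right attribution. In Liverani's argument the $2$ in front of $\|\mu\|_{BV}/\inf T'$ arises because the repaired test function on each branch has sup norm bounded by $2/\inf T'$ (the original bound $1/\inf T'$ plus the size of the linear correction used to kill the boundary values, which is again at most $1/\inf T'$); similarly for the other two terms. This is only a matter of constant bookkeeping, not of the overall strategy, and since you already planned to defer the boundary-term details to the references the sketch is adequate.
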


We remark that, if an inequality of the following form 
\begin{equation*}
||Lg||\leq 2\lambda ||g||+B^{\prime }|g|_{w}
\end{equation*}%
is established (with $2\lambda <1$) then,  iterating we have $%
||L^{2}g||\leq 2\lambda ||Lg||+B^{\prime }|g|_{w}=2\lambda (2\lambda
||Lg||+B^{\prime }|g|_{w})+B^{\prime }|g|_{w}...$ and thus%
\begin{equation*}
||L^{n}g||\leq 2^{n}\lambda ^{n}||Lg||+\frac{B^{\prime }}{1-2\lambda }%
|g|_{w}.
\end{equation*}

\subsection{Piecewise expanding maps with holes}\label{holely}

Let us consider a piecewise expanding map with a hole, which is an interval $I$. 
Let us see that if the system without hole  satisfies a Lasota Yorke inequality and is sufficiently expanding, we can
deduce a Lasota Yorke inequality for the system with an hole.

Let indeed consider a starting measure with bounded variation density, $f$.

If the piecewise expanding map without hole satisfies

\begin{equation*}
||Lg||_{BV}\leq 2\lambda ||g||_{BV}+B^{\prime }||g||_{1},
\end{equation*}

then the action of the hole is to multiply $Lg$ by $1_{I^{c}}$ hence
introducing two new jumps which we can bound by $||Lf||_{\infty }\leq \frac{1%
}{2}||Lf||_{BV}$, hence 
\begin{equation*}
||L_{H}g||_{BV}\leq 2||Lg||_{BV}
\end{equation*}%
and the system will satisfy%
\begin{equation*}
||L_{H}g||_{BV}\leq 4\lambda ||g||_{BV}+2B^{\prime }||g||_{1}.
\end{equation*}

This is enough to obtain the Lasota Yorke inequality in interesting examples
of piecewise expanding maps with holes (see Section \ref{escexp} ).

We recall that considering the Ulam discretization for these systems, the needed approximation inequality follows again, by Lemma \ref{lemp},  and the assumptions of Section \ref{escape} applies.

\begin{remark}
We remark that for our method to be applied no conditions on the size of the hole are needed.
\end{remark}

\section{Numerical experiments}

\subsection{Convergence to equilibrium: Lanford map}\label{subsec:Lanford}
In this subsection we estimate the speed of convergence to equilibrium for the  map which was
investigated in \cite{Lan}. The map $T:[0,1]\rightarrow \lbrack 0,1]$ is
given by 
\begin{equation*}
T:x\mapsto 2x+\frac{1}{2}x(1-x)\quad (\text{mod }1).
\end{equation*}%
To apply Proposition \ref{theo:LY} we have to take into account its second iterate  $F:=T^{2}$. 

The data below refers to the map $F$ and to the discretization of its transfer operator, 
and are some input and outputs of our algorithm.

\begin{center}
\begin{equation*}
\begin{array}{cc}
A=1 & \lambda_1\leq 0.32 \\
B\leq 30.6 & \delta=1/1048576 \\
\lambda_2<0.5 & n_1=18 \\
C \in [1.94,1.95] & D \in [70.992,70.993]
\end{array}
\end{equation*}
\end{center}

\begin{remark}
Using the estimates developed in \cite{GN}, an approximation $f_{\delta}$  for the invariant density $f$  can be computed with an explicit bound on the error.  The graph of the map and the computed density are drawn in figure \ref{fig:lanford} and \ref{fig:lanfordinvariant} respectively.  In the plotted case  $||f-f_{\delta}||_1\leq 0.016$.
\end{remark}

\begin{figure}[!h]
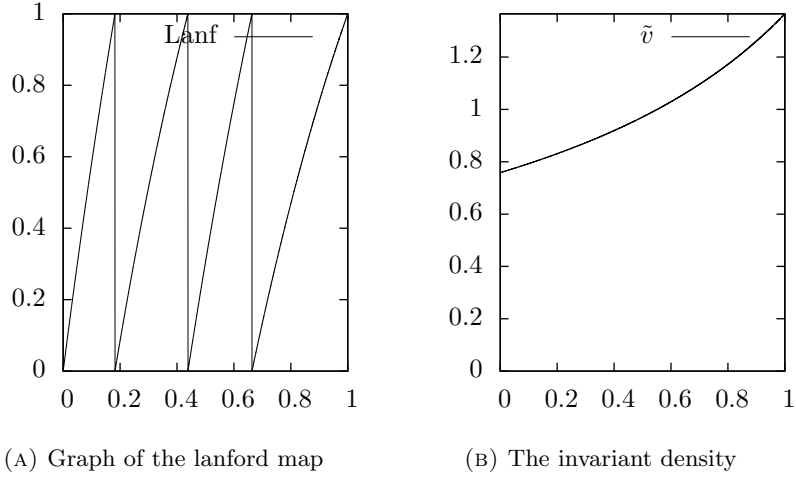

  \begin{subfigure}[b]{0.45\textwidth}
 \input{lanfdynamic.tex}
  \caption{Graph of the lanford map}
  \label{fig:lanford}
  \end{subfigure}  
  \begin{subfigure}[b]{0.45\textwidth}
 \input{lanfdensity.tex}
 \caption{The invariant density}
    \label{fig:lanfordinvariant}
\end{subfigure}
\caption{The Lanford map}
\end{figure}

The matrix $M$ that corresponds to our data is such that
\[M \preceq \left[\begin{smallmatrix} 1.24\cdot 10^{-9} & 30.6\\ 
1.86\cdot 10^{-6} & 0.5013 \end{smallmatrix}\right].\]

Using equation \eqref{eq:rho} we can compute:
\[\rho \in [0.5013,0.5014].\]

\begin{remark}\label{rem:coeffab}
With a simple computation it is possible to see that the coefficients $a,b$ (see Theorem \ref{prop1}) associated to the leading eigenvalue $\rho$ are:
\begin{align*}
a&=\frac{A\lambda _{1}^{n_{1}}-\lambda_2-\delta n_1 D+\sqrt{(A\lambda
_{1}^{n_{1}}-\delta n_{1}D-\lambda _{2})^{2}+4\delta BC}}{A\lambda _{1}^{n_{1}}-\lambda_2-\delta n_1 D+2B+\sqrt{(A\lambda
_{1}^{n_{1}}-\delta n_{1}D-\lambda _{2})^{2}+4\delta BC}}\\& b=\frac{2B}{A\lambda _{1}^{n_{1}}-\lambda_2-\delta n_1 D+2B+\sqrt{(A\lambda
_{1}^{n_{1}}-\delta n_{1}D-\lambda _{2})^{2}+4\delta BC}}.
\end{align*}
\end{remark}

In this example, hence:
\[a\in [3.6,3.7]\cdot 10^{-6},\quad b\in [0.9999963,0.9999964]\]

Therefore,if $L$ is the transfer operator associated to $F=T^2$ and using \eqref{decayrate} and \eqref{decayrate_weak}, we have the following
estimates:
\begin{align*}
\Vert L^{k}g\Vert_{BV}  &\leq (270839)\cdot (0.5014)
^{\left\lfloor \frac{k}{18}\right\rfloor }||g||_{BV}.  \label{decayrate} \\
\Vert L^{k}g\Vert _{L^1} &\leq  (30.7) \cdot (0.5014)
^{\left\lfloor \frac{k}{18}\right\rfloor }||g||_{BV}.
\end{align*}

We can also use the coefficients of the powers of the matrix (computed using interval arithmetics) to obtain upper bounds 
as in the following table:
\begin{align*}
\begin{array}{ccc}
\textrm{iterations} & \textrm{bound for }||L^h g||_{BV} & \textrm{bound for }|| L^h g||_1 \\
h=36 & 5.665\cdot 10^{-5}||g||_{BV}+15.34||g||_1 & 9.279\cdot 10^{-7}||g||_{BV}+2.513\cdot 10^{-1}||g||_1 \\
h=72 & 1.424\cdot 10^{-5}||g||_{BV}+3.855||g||_1 & 2.333\cdot 10^{-7}||g||_{BV}+6.316\cdot 10^{-2}||g||_1 \\
h=108 & 3.578\cdot 10^{-6}||g||_{BV}+9.689\cdot 10^{-1}||g||_1 & 5.862\cdot 10^{-8}||g||_{BV}+1.588\cdot 10^{-2}||g||_1 \\
h=144 & 8.992\cdot 10^{-7}||g||_{BV}+2.436\cdot 10^{-1}||g||_1 & 1.474\cdot 10^{-8}||g||_{BV}+3.990\cdot 10^{-3}||g||_1 \\
\end{array} 
\end{align*}

\subsection{Convergence to equilibrium: a Lorenz-type map}
In this subsection we estimate the speed of convergence to equilibrium for a Lorenz type $1$-dimensional map
using the recursive estimate established in section \ref{sec1}.
To do so, we use the estimates and the software developed in \cite{GN,GN2}.

The subject of our investigation is the $1$ dimensional Lorenz map acting on $I=[0,1]$ given by:
\[
T(x)=\left\{
\begin{array}{cc}
\theta \cdot |x-1/2|^{\alpha} & 0\leq x< 1/2 \\
1-\theta \cdot |x-1/2|^{\alpha} & 1/2 < x \leq 1
\end{array}
\right.
\]
with $\alpha=57/64$ and $\theta=109/64$.

Please note that since the Lorenz $1$-dimensional map does not have bounded derivative, the application of Proposition \ref{theo:LY}, 
is not immediate, but the following is true.

\begin{theorem}[\cite{GN2} Theorem 21]
Let $T$ be a $1$-dimensional piecewise expanding map, possibly with infinite derivative. 
Denote by $\{d_i\}$ the set of the discontinuity points of $T$, increasingly ordered. 
Fixed a parameter $l>0$ let \[I_l=\bigg\{x\in I \mid \bigg|\frac{T''(x)}{(T'(x))^2}\bigg|>l\bigg\}.\]
We have that $T$ satisfies a Lasota Yorke inequality
\[
||L\mu ||_{BV}\leq \lambda_1 ||f||_{BV}+B||f||_1.
\]
with
\[
\lambda_1\leq \frac{1}{2}\int_{I_l}\bigg|\frac{T''(x)}{(T'(x))^2}\bigg|dx+\frac{2}{\inf |T'|} \quad B\leq \frac{1}{1-\lambda_1}\bigg(\frac{2}{\min(d_{i+1}-d_i)}+l\bigg)
\]
(and in particular  it is possible to choose $l$ in a way that $\lambda_1 < 1$ ).
\end{theorem}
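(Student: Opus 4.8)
The plan is to run a classical Lasota--Yorke computation, using the dual description $||\mu||_{BV}=\sup\{\,|\mu(\phi')| : \phi\in C^1,\ |\phi|_\infty=1\,\}$ recalled above, and to cope with the points where $|T''/(T')^2|$ is unbounded by a truncation argument (the bounded-derivative case being the computation behind Proposition~\ref{th8}). So I would fix $\phi\in C^1$ with $|\phi|_\infty=1$ and write, using $L\mu(\phi')=\mu(\phi'\circ T)=\int \phi'(T(x))\,f(x)\,dx$ and the fact that on each branch $(d_i,d_{i+1})$ the map $T$ is a $C^2$ diffeomorphism onto its image, so that there $\phi'(T(x))=(\phi\circ T)'(x)/T'(x)$. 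Integration by parts on each branch gives
\begin{equation*}
\int_{d_i}^{d_{i+1}}\phi'(T(x))\,f(x)\,dx=\left[\frac{\phi(T(x))\,f(x)}{T'(x)}\right]_{d_i^+}^{d_{i+1}^-}-\int_{d_i}^{d_{i+1}}\phi(T(x))\,\frac{d}{dx}\!\left(\frac{f}{T'}\right)\!(x)\,dx ,
\end{equation*}
and I would expand $\frac{d}{dx}(f/T')=f'/T'-fT''/(T')^2$, so that after summing over the branches the bound on $||L\mu||_{BV}$ splits into a boundary part, a part coming from $f'/T'$, and a part coming from $fT''/(T')^2$.

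Next I would dispose of the first two parts, which are routine. The part coming from $f'/T'$ is at most $\frac{1}{\inf|T'|}\sum_i\mathrm{Var}_{(d_i,d_{i+1})}(f)\le\frac{1}{\inf|T'|}||\mu||_{BV}$ since $|\phi|_\infty\le1$. The boundary part is controlled by $\frac{1}{\inf|T'|}$ times a sum of one-sided limits $|f(d_i^\pm)|$; bounding each such limit by $\frac{1}{\min(d_{i+1}-d_i)}\int_{\mathrm{branch}}|f|+\mathrm{Var}_{\mathrm{branch}}(f)$ (take a generic interior point of the branch and add its variation) and summing produces a term bounded by $\frac{2}{\min(d_{i+1}-d_i)}||f||_1$ together with a further multiple of $\mathrm{Var}(f)\le||\mu||_{BV}$; a careful accounting of these variation contributions is what is needed to reach the constant $\frac{2}{\inf|T'|}$ rather than a larger one. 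I would also observe that in the infinite-derivative case the boundary terms $\phi(T(x))f(x)/T'(x)$ simply vanish at the branch endpoints, because there $1/T'\to0$ and $f/T'$ extends continuously by $0$; this is also what makes the integration by parts above legitimate.

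The genuinely new point is the third part, $\bigl|\int \phi(T(x))\,f(x)T''(x)/T'(x)^2\,dx\bigr|\le\int |f|\,|T''/(T')^2|$, which cannot be estimated crudely by $||f||_\infty\int_0^1|T''/(T')^2|$: although $|T''/(T')^2|$ is integrable, its integral over all of $[0,1]$ may be too large to produce a contraction. Instead I would split the integral over $I_l$ and $I_l^c$. On $I_l^c$ it is at most $l\int_{I_l^c}|f|\le l||f||_1$; on $I_l$ it is at most $||f||_\infty\int_{I_l}|T''/(T')^2|\,dx\le\tfrac12||\mu||_{BV}\int_{I_l}|T''/(T')^2|\,dx$, using the elementary inequality $2||f||_\infty\le||\mu||_{BV}$ recalled in the footnote above. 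Collecting the $||\mu||_{BV}$-coefficients from all three parts gives $\lambda_1\le\tfrac12\int_{I_l}|T''/(T')^2|\,dx+\frac{2}{\inf|T'|}$, and collecting the $||f||_1$-coefficients gives a constant of the form $\frac{2}{\min(d_{i+1}-d_i)}+l$; the prefactor $\frac{1}{1-\lambda_1}$ in the stated $B$ then appears when one passes from this one-step estimate to one valid uniformly over all iterates of $L$, via the geometric series $\sum_{k\ge0}\lambda_1^k=\frac{1}{1-\lambda_1}$.

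Finally, since $x\mapsto|T''(x)/T'(x)^2|$ is integrable on $[0,1]$ and $\bigcap_{l>0}I_l$ is contained in the finite set of discontinuity points of $T$, dominated convergence gives $\int_{I_l}|T''/(T')^2|\,dx\to0$ as $l\to\infty$; hence, as soon as $\inf|T'|>2$, one can choose $l$ so large that $\tfrac12\int_{I_l}|T''/(T')^2|\,dx+\frac{2}{\inf|T'|}<1$, i.e. $\lambda_1<1$. The main obstacle is precisely this third step: the distortion term is genuinely singular, and only the truncation at level $l$ --- trading an honest $L^\infty$ bound on the singular part against an $L^1$ bound on the tame part --- recovers a bona fide Lasota--Yorke contraction; a secondary technical point is to make sure the integration by parts is legitimate near the branch endpoints where $T'$ may blow up, which works because there $f/T'$ vanishes.
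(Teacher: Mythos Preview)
The paper does not actually prove this theorem: it is quoted verbatim from reference \cite{GN2} (Theorem 21 there) and used as a black box to supply Lasota--Yorke constants for the Lorenz-type example. There is therefore no argument in the present paper to compare your sketch against.

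On its own merits your sketch is sound and follows the natural route. It is the computation behind Proposition~\ref{th8}, modified so as to cope with an unbounded distortion $|T''/(T')^{2}|$: the key device is exactly the truncation you describe, splitting $\int |f|\,|T''/(T')^{2}|$ over $I_l$ and $I_l^{c}$, using the pointwise bound $|T''/(T')^{2}|\le l$ on $I_l^{c}$ and the inequality $2\|f\|_{\infty}\le \|\mu\|_{BV}$ on $I_l$. This yields the stated coefficient $\lambda_1\le \tfrac12\int_{I_l}|T''/(T')^{2}|+\tfrac{2}{\inf|T'|}$. Your reading of the factor $\tfrac{1}{1-\lambda_1}$ in $B$ as arising from the geometric series when one passes from the one-step estimate to the uniform-in-$n$ form (exactly as in the remark following Proposition~\ref{th8}) is also correct. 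One minor inconsistency: you first account for the boundary terms as contributing a variation piece toward the coefficient $\tfrac{2}{\inf|T'|}$, and then say they vanish outright when $|T'|\to\infty$ at the branch endpoints. Since the hypothesis is only ``possibly with infinite derivative'', both situations can occur (even on different branches of the same map), so both mechanisms are needed; the stated upper bound $\tfrac{2}{\inf|T'|}$ is loose enough to absorb either case.
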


We apply our strategy to the map $F:=T^4$, please remark that, once we have the Lasota-Yorke coefficients and the discretization, the approach
is the same as in Subsection \ref{subsec:Lanford}.
The data below refers to the map $F$ and to the discretization of its transfer operator, 
and are some input and outputs of our algorithm; to compute the Lasota-Yorke constants we fixed $l=300$.
\begin{center}
\begin{equation}\label{tab:lorenz}
\begin{array}{cc}
A=1  &  \lambda_1\leq 0.884\\
B\leq 4049 & \delta=1/2097152 \\
\lambda_2<0.002 & n_1=10 \\
C\in [16.24,16.25] & D\in [11677.3,11677.4]
\end{array}
\end{equation}
\end{center}

\begin{remark}
The graph of the map and its  invariant density are drawn in figure \ref{fig:lorenz} and \ref{fig:lorenzinvariant} respectively.
Using the estimates developed in \cite{GN}, if $f$ is the density of the invariant measure and $f_{\delta}$ is the computed density, 
we have in this case that $||f-f_{\delta}||_1\leq 0.047$.
\end{remark}

\begin{figure}[!h]
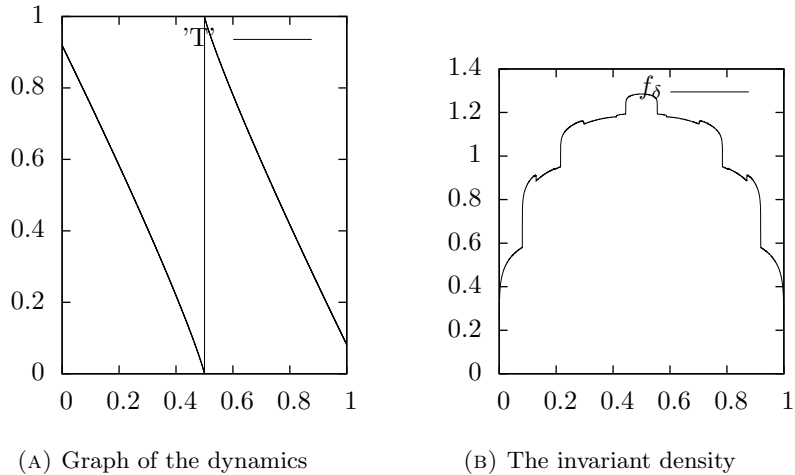

  \begin{subfigure}[b]{0.45\textwidth}
 \input{dynamic_lorenz.tex}
  \caption{Graph of the dynamics}
  \label{fig:lorenz}

  \end{subfigure}  
  \begin{subfigure}[b]{0.45\textwidth}
 \input{lorenz_invariant.tex}
 \caption{The invariant density}
    \label{fig:lorenzinvariant}
\end{subfigure}
\caption{Lorenz $1$-dimensional map}
\end{figure}

The matrix that corresponds to our data is such that
\[M \preceq \left[\begin{smallmatrix} 0.2915 & 4049\\ 
7.75\cdot 10^{-8} & 0.058 \end{smallmatrix}\right].\]

\begin{remark}
Please note that the Lorenz map we studied has some features that imply a large coefficient $B$ in the Lasota-Yorke inequality therefore
leading to a slower convergence to the equilibrium in the $||\ldotp||_{BV}$ norm.
\end{remark}

Using equation \eqref{eq:rho} we can compute:
\[\rho \in [0.386,0.387].\]

Using remark \ref{rem:coeffab} we have that the coefficients $a,b$ are such that:
\[a\in ([8.12,8.13]\cdot 10^{-5})\quad b\in [0.9999187,0.9999188]\]

Therefore,if $L$ is the transfer operator associated to $F=T^4$ and using \eqref{decayrate} and \eqref{decayrate_weak}, we have the following
estimates:
\begin{align*}
\Vert L^{k}g\Vert_{BV}  &\leq (16356)\cdot (0.387)
^{\left\lfloor \frac{k}{10}\right\rfloor }||g||_{BV}.  \label{decayrate} \\
\Vert L^{k}g\Vert _{L^1} &\leq  (4050) \cdot (0.387)
^{\left\lfloor \frac{k}{10}\right\rfloor }||g||_{BV}.
\end{align*}

We can also use the coefficients of the powers of the matrix (computed using interval arithmetics) to obtain upper bounds 
as in the following table:
\begin{align*}
\begin{array}{ccc}
\textrm{iterations} & \textrm{bound for }||L^h g||_{BV} & \textrm{bound for }|| L^h g||_1 \\
h=20 & 1.163\cdot 10^{-1}||g||_{BV}+1.414\cdot 10^{3}||g||_1 & 2.704\cdot 10^{-6}||g||_{BV}+3.469\cdot 10^{-2}||g||_1 \\
h=40 & 1.735\cdot 10^{-2}||g||_{BV}+2.134\cdot 10^{2}||g||_1 & 4.082\cdot 10^{-7}||g||_{BV}+5.025\cdot 10^{-3}||g||_1 \\
h=60 & 2.594\cdot 10^{-3}||g||_{BV}+31.92||g||_1 & 6.105\cdot 10^{-8}||g||_{BV}+7.513\cdot 10^{-4}||g||_1 \\
h=80 & 3.880\cdot 10^{-4}||g||_{BV}+4.774||g||_1 & 9.131\cdot 10^{-9}||g||_{BV}+1.124\cdot 10^{-4}||g||_1 \\
\end{array} 
\end{align*}

\subsection{Escape rates}\label{escexp}

In this subsection we estimate the escape rates for a non markov map using the estimates developed in Section\ref{sec1}.
We will study:
\[T(x)=\frac{23}{5}x \quad \textrm{mod } 1,\]
with an hole of size $1/8$ centered in $1/2$.

For the system with holes, the Lasota-Yorke inequality  has coefficients
\[A=1 \quad \lambda_1< 20/23 \quad B'\leq 7.08.\]

Below, some of the data (input and outputs) of our algorithm.
\begin{center}
\[
\begin{array}{cc}
A=1  &  \lambda_1\leq 20/23\leq 0.87\\
B\leq 7.08 & \delta=1/65536\\
\lambda_2<0.5 & n_1=17 \\
C \in [14.384,14.385]& D \in [20.31,20.32] 
\end{array}
\]
\end{center}

The graph of the map and a non rigorous estimation of the conditionally invariant density are drawn in figure \ref{fig:lin_nonmark} and \ref{fig:lin_nonmark_invariant} respectively.

\begin{figure}[!h]
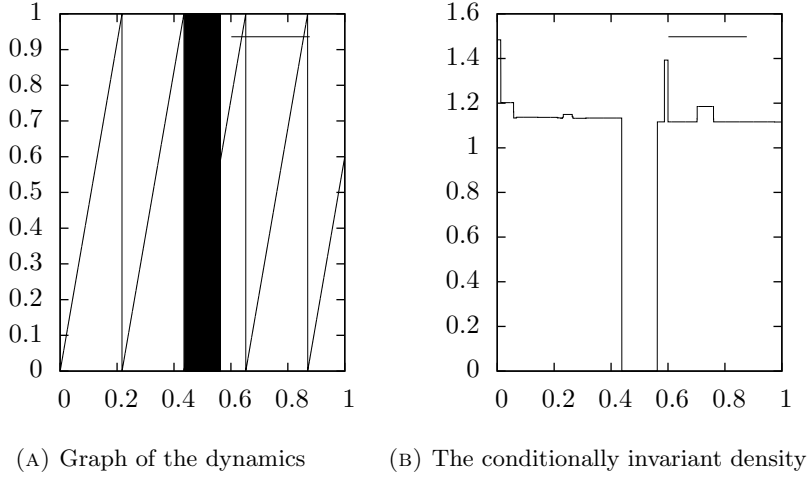

  \begin{subfigure}[b]{0.45\textwidth}
  \input{dynamic_hole.tex}
  \caption{Graph of the dynamics}
  \label{fig:lin_nonmark}

  \end{subfigure}  
  \begin{subfigure}[b]{0.45\textwidth}
\input{hole_invariant.tex}
 \caption{The conditionally invariant density}
    \label{fig:lin_nonmark_invariant}
\end{subfigure}
\caption{Piecewise linear non-markov map}
\end{figure}

The matrix that corresponds to our data is such that
\[M \preceq \left[\begin{smallmatrix} 0.094 & 7.08\\ 
2.1\cdot 10^{-4} & 0.506 \end{smallmatrix}\right].\]

Using equation \eqref{eq:rho} we can compute:
\[\rho \in [0.509,0.5091].\]

Using remark \ref{rem:coeffab} we have that the coefficients $a,b$ are such that:
\[a\in ([5.282,5.283]\cdot 10^{-4})\quad b\in [0.9994717,0.9994718]\]

Therefore,if $L$ is the transfer operator associated to $T$ and using \eqref{decayrate} and \eqref{decayrate_weak}, we have the following
estimates:
\begin{align*}
\Vert L^{k}g\Vert_{BV}  &\leq (1900.2)\cdot (0.5091)
^{\left\lfloor \frac{k}{17}\right\rfloor }||g||_{BV}.  \label{decayrate} \\
\Vert L^{k}g\Vert _{L^1} &\leq  (7.09) \cdot (0.5091)
^{\left\lfloor \frac{k}{17}\right\rfloor }||g||_{BV}.
\end{align*}

We can also use the coefficients of the powers of the matrix (computed using interval arithmetics) to obtain upper bounds 
as in the following table:
\begin{align*}
\begin{array}{ccc}
\textrm{iterations} & \textrm{bound for }||L^h g||_{BV} & \textrm{bound for }|| L^h g||_1 \\
h=34 & 1.034\cdot 10^{-2}||g||_{BV}+4.241||g||_1 & 1.315\cdot 10^{-4}||g||_{BV}+2.569\cdot 10^{-1}||g||_1\\
h=68 & 6.645\cdot 10^{-4}||g||_{BV}+1.134||g||_1 & 3.513\cdot 10^{-5}||g||_{BV}+6.654\cdot 10^{-2}||g||_1\\
h=102 & 1.559\cdot 10^{-4}||g||_{BV}+2.939\cdot 10^{-1}||g||_1 & 9.111\cdot 10^{-6}||g||_{BV}+1.724\cdot 10^{-2}||g||_1\\
h=136 & 4.025\cdot 10^{-5}||g||_{BV}+7.615\cdot 10^{-2}||g||_1 & 2.361\cdot 10^{-6}||g||_{BV}+4.467\cdot 10^{-3}||g||_1
\end{array} 
\end{align*}

\section{Appendix: The approximation inequality from the Lasota Yorke one.\label{appp}}

In this section we see that the approximation inequality (\ref{2} ) directly follows
from the Lasota Yorke inequality and from natural assumptions on the
approximating operator (including the Ulam discretization, which is used in the experiments).
 Hence the approximation inequality (\ref{2} ) that is assumed in the paper is a natural assumption in a general class of system, and approximations.

\begin{lemma}
\label{lemp} Suppose $L$ satisfy a Lasota Yorke inequality (\ref{1}) and $L_{\delta } $ is defined by composing with a ''projection'' $\pi_\delta $ satisfying a certain approximation inequality:

\begin{itemize}
\item $L_{\delta }=\pi _{\delta }L\pi _{\delta }$ with $||\pi _{\delta
}v-v||_{w}\leq \delta ||v||_s$ for all $v\in \mathcal{S}$

\item $\pi _{\delta }$ and $L$ are weak contractions for the norm $||~||_{w}$
\end{itemize}
 then $\forall f\in\mathcal{S}$%
\begin{equation*}
||L^{n}f-L_{\delta }^{n}f||_{w}\leq \delta \frac{(A\lambda _{1}+1)A}{%
1-\lambda _{1}}||f||_s+\delta Bn(A\lambda _{1}+2)||f||_{w}
\end{equation*}

\end{lemma}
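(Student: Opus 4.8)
The plan is to run the standard telescoping argument, using the one–step Lasota–Yorke inequality (\ref{1}) to control the strong norm and the approximation property of $\pi_\delta$ together with the weak contraction hypotheses to control the weak norm. First I would write, for any two linear operators,
\begin{equation*}
L^{n}-L_{\delta }^{n}=\sum_{k=0}^{n-1}L_{\delta }^{k}\,(L-L_{\delta })\,L^{\,n-1-k}.
\end{equation*}
Since $\pi_\delta$ and $L$ are weak contractions, so is $L_\delta=\pi_\delta L\pi_\delta$, hence $\|L_\delta^{k}h\|_{w}\le\|h\|_{w}$ for every $h$. Applying this with $h=(L-L_\delta)L^{\,n-1-k}f$ and the triangle inequality reduces the whole estimate to bounding $\|(L-L_\delta)g\|_{w}$ for the single operator $L-L_\delta$ applied to $g:=L^{\,n-1-k}f$.

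The key step is the bound on $\|(L-L_\delta)g\|_{w}$. I would use the operator identity
\begin{equation*}
L-L_{\delta }=L-\pi_{\delta }L\pi_{\delta }=(I-\pi_{\delta })L+\pi_{\delta }L(I-\pi_{\delta }),
\end{equation*}
and estimate the two summands separately in the weak norm. For the first, $\|(I-\pi_{\delta })Lg\|_{w}\le\delta\|Lg\|_{s}\le\delta\bigl(A\lambda_{1}\|g\|_{s}+B\|g\|_{w}\bigr)$ by the approximation hypothesis and (\ref{1}) with $n=1$. For the second, $\|\pi_{\delta }L(I-\pi_{\delta })g\|_{w}\le\|(I-\pi_{\delta })g\|_{w}\le\delta\|g\|_{s}$, where I use that $\pi_\delta$ and $L$ are weak contractions and then the approximation hypothesis again. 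This gives $\|(L-L_\delta)g\|_{w}\le\delta\bigl((A\lambda_{1}+1)\|g\|_{s}+B\|g\|_{w}\bigr)$.

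It then remains to substitute $g=L^{j}f$, $j=n-1-k$, and sum over $j=0,\dots,n-1$. By (\ref{1}), $\|L^{j}f\|_{s}\le A\lambda_{1}^{j}\|f\|_{s}+B\|f\|_{w}$ for $j\ge1$, and (using that one may assume $A\ge1$, as is standard for Lasota–Yorke inequalities and holds in all our examples) also $\|f\|_{s}\le A\|f\|_{s}+B\|f\|_{w}$; moreover $\|L^{j}f\|_{w}\le\|f\|_{w}$ since $L$ is a weak contraction. Summing the geometric series yields $\sum_{j=0}^{n-1}\|L^{j}f\|_{s}\le\bigl(1+\tfrac{A\lambda_{1}}{1-\lambda_{1}}\bigr)\|f\|_{s}+(n-1)B\|f\|_{w}\le\tfrac{A}{1-\lambda_{1}}\|f\|_{s}+(n-1)B\|f\|_{w}$ and $\sum_{j=0}^{n-1}\|L^{j}f\|_{w}\le n\|f\|_{w}$, so that
\begin{equation*}
\|L^{n}f-L_{\delta }^{n}f\|_{w}\le\delta(A\lambda_{1}+1)\Bigl(\tfrac{A}{1-\lambda_{1}}\|f\|_{s}+(n-1)B\|f\|_{w}\Bigr)+\delta Bn\|f\|_{w}.
\end{equation*}
The coefficient of $\|f\|_{s}$ is exactly $\delta\tfrac{(A\lambda_{1}+1)A}{1-\lambda_{1}}$, and the coefficient of $\|f\|_{w}$ is $\delta B\bigl((A\lambda_{1}+1)(n-1)+n\bigr)\le\delta Bn(A\lambda_{1}+2)$, which is the claim.

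I do not expect a real obstacle: the argument is elementary. The only points needing care are (i) handling the cross term $\pi_\delta L(I-\pi_\delta)g$ by the weak contraction of $\pi_\delta L$ rather than by any strong estimate, since $I-\pi_\delta$ is only controlled in the weak norm by $\delta\|\cdot\|_{s}$; and (ii) the constant bookkeeping, in particular the elementary inequalities $1+\tfrac{A\lambda_{1}}{1-\lambda_{1}}\le\tfrac{A}{1-\lambda_{1}}$ and $(A\lambda_{1}+1)(n-1)+n\le n(A\lambda_{1}+2)$, both of which use $A\ge1$ and $\lambda_{1}<1$.
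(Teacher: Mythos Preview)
Your proof is correct and follows essentially the same route as the paper: the same telescoping decomposition $L^n-L_\delta^n=\sum L_\delta^{\,k}(L-L_\delta)L^{\,n-1-k}$, the same one-step estimate $\|(L-L_\delta)g\|_w\le\delta\bigl((A\lambda_1+1)\|g\|_s+B\|g\|_w\bigr)$ via the splitting $(I-\pi_\delta)L+\pi_\delta L(I-\pi_\delta)$, and the same use of the Lasota--Yorke inequality to sum over the iterates. You are in fact slightly more careful than the paper in flagging explicitly the harmless assumption $A\ge1$ needed for the $j=0$ term (the paper tacitly applies the Lasota--Yorke bound at $k-1=0$), and your intermediate constant $(A\lambda_1+1)(n-1)+n$ is marginally sharper before you relax it to $n(A\lambda_1+2)$.
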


\begin{proof}
It holds 
\begin{equation*}
||(L-L_{\delta })f||_{w}\leq ||\pi _{\delta }L\pi _{\delta }f-\mathbf{\pi }%
_{\delta }Lf||_{w}+||\mathbf{\pi }_{\delta }Lf-Lf||_{w},
\end{equation*}%
but 
\begin{equation*}
\mathbf{\pi }_{\delta }L\pi _{\delta }f-\mathbf{\pi }_{\delta }Lf=\mathbf{%
\pi }_{\delta }L(\pi _{\delta }f-f).
\end{equation*}%
Since both $\pi _{\delta }$ and $L$ are weak contractions and since $||\pi
_{\delta }v-v||_{w}\leq \delta ||v||_s$ 
\begin{equation*}
||\mathbf{\pi }_{\delta }L(\pi _{\delta }f-f)||_{w}\leq ||\pi _{\delta
}f-f||_{w}\leq \delta ||f||_s.
\end{equation*}

On the other hand%
\begin{equation*}
||\mathbf{\pi }_{\delta }Lf-Lf||_{w}\leq \delta ||Lf||_s\leq \delta (A\lambda
_{1}||f||_s+B||f||_{w})
\end{equation*}

which gives 
\begin{equation}
||(L-L_{\delta })f||_{w}\leq \delta (A\lambda _{1}+1)||f||_s+\delta B||f||_{w}
\label{1iter}
\end{equation}

Now let us consider $(L_{\delta }^{n}-L^{n})f$. It holds%
\begin{eqnarray*}
||(L_{\delta }^{n}-L^{n})f||_{w} &\leq &\sum_{k=1}^{n}||L_{\delta
}^{n-k}(L_{\delta }-L)L^{k-1}f||_{w}\leq \sum_{k=1}^{n}||(L_{\delta
}-L)L^{k-1}f||_{w} \\
&\leq &\sum_{k=1}^{n}\delta (A\lambda _{1}+1)||L^{k-1}f||_s+\delta
B||L^{k-1}f||_{w} \\
&\leq &\delta \sum_{k=1}^{n}(A\lambda _{1}+1)(A\lambda
_{1}^{k-1}||f||_s+B||f||_{w})+B||f||_{w} \\
&\leq &\delta \frac{(A\lambda _{1}+1)A}{1-\lambda _{1}}||f||_s+\delta
Bn(A\lambda _{1}+2)||f||_{w}.
\end{eqnarray*}
\end{proof}

\begin{remark}
From the above Lemma  we have in this case that $C$ and $D$ in \eqref{kkk} are given by
\[ C=\frac{(A\lambda _{1}+1)A}{1-\lambda _{1}},\quad  D=B\cdot(A\lambda_{1}+2).\]
\end{remark}

\end{document}